\newcommand{\Addresses}{{
		\bigskip
		\footnotesize
		
		\textsc{Mathematical Institute, University of Oxford, Oxford, OX2 6GG, UK}\par\nopagebreak
		\textit{E-mail address:} \texttt{gorodetsky@maths.ox.ac.uk}
}}
\author{Ofir Gorodetsky} \title{Magic squares, the symmetric group and M\"obius randomness}
\date{}
\theoremstyle{plain}
\newtheorem{thm}{Theorem}[section]
\newtheorem{lem}[thm]{Lemma}  
\newtheorem{proposition}[thm]{Proposition}
\newtheorem{conj}[thm]{Conjecture}
\newtheorem{cor}[thm]{Corollary}
\theoremstyle{remark}
\newtheorem{remark}[thm]{Remark}
\newcommand{\cupdot}{\mathbin{\mathaccent\cdot\cup}}
\newcommand{\ZZ}{\mathbb{Z}}
\newcommand{\FF}{\mathbb{F}}
\newcommand{\RR}{\mathbb{R}}
\newcommand{\Sc}{\mathrm{Sc}}
\newcommand{\Sym}{\mathrm{Sym}}
\newcommand{\Tr}{\mathrm{Tr}}
\newcommand{\sgn}{\mathrm{sgn}}
\numberwithin{equation}{section}
\newcommand{\subjclass}[2][2020]{%
	\let\@oldtitle\@title%
	\gdef\@title{\@oldtitle\footnotetext{#1 \emph{Mathematics subject classification:} #2}}%
}
\newcommand{\keywords}[1]{%
	\let\@@oldtitle\@title%
	\gdef\@title{\@@oldtitle\footnotetext{\emph{Key words:} #1.}}%
}
\subjclass{60B20 (Primary), 05E05, 05E10, 05A15, 11K65, 11L40}
\keywords{magic squares, circular unitary ensemble, RSK algorithm, Gelfand--Tsetlin pattern,  M\"obius function}
\begin{document}
\maketitle
\begin{abstract}
Diaconis and Gamburd computed moments of secular coefficients in the CUE ensemble. We use the characteristic map to give a new combinatorial proof of their result. We also extend their computation to moments of traces of symmetric powers, where the same result holds but in a wider range.

Our combinatorial proof is inspired by gcd matrices, as used by Vaughan and Wooley and by Granville and Soundararajan. We use these CUE computations to suggest a conjecture about moments of characters sums twisted by the Liouville (or by the M\"obius) function, and establish a version of it in function fields.

The moral of our conjecture (and its verification in function fields) is that the Steinhaus random multiplicative function is a good model for the Liouville (or for the M\"obius) function twisted by a random Dirichlet character.

We also evaluate moments of secular coefficients and traces of symmetric powers, without any condition on the size of the matrix. As an application we give a new formula for a matrix integral that was considered by Keating, Rodgers, Roditty-Gershon and Rudnick in their study of the $k$-fold divisor function.
\end{abstract}
\section{Introduction}
Consider the complex unitary group $U(N)$ endowed with the probability Haar measure. The $n$th secular coefficient of $U\in U(N)$ is defined through the expansion
\[ \det(zI+U) = \sum_{n=0}^N z^{N-n} \Sc_n(U). \]

If $A=(a_{i,j})$ is an $m\times n$ matrix with nonnegative integer entries, Diaconis and Gamburd \cite{diaconis2004} define the row-sum vector $\mathrm{row}(A)\in \ZZ^m$ and column-sum vector $\mathrm{col}(A) \in \ZZ^n$ by 
\[ \mathrm{row}(A)_i = \sum_{j=1}^{n} a_{i,j}, \qquad \mathrm{col}(A)_j = \sum_{i=1}^{m} a_{i,j}.\]
Given two partitions $\mu = (\mu_1,\ldots,\mu_m)$ and $\widetilde{\mu} = (\widetilde{\mu}_1,\ldots, \widetilde{\mu}_n)$ they denote by $N_{\mu, \widetilde{\mu}}$ the number of nonnegative $m \times n$ integer matrices $A$ with $\mathrm{row}(A)=\mu$ and $\mathrm{col}(A) = \widetilde{\mu}$. When $m=n=k$ and $\mu_1=\ldots=\mu_k=\widetilde{\mu}_1=\ldots=\widetilde{\mu}_k$, matrices counted by $N_{\mu,\widetilde{\mu}}$ are known as \textit{magic squares of order $k$}, see \cite[\S2.2]{diaconis2004} for a review. Given sequences $(a_1,\ldots,a_{\ell})$ and $(b_1,\ldots,b_{\ell})$ of nonnegative integers, they proved the following equality \cite[Thm.~2]{diaconis2004}:
\begin{equation}\label{eq:DG}
\int_{U(N)} \prod_{j=1}^{\ell} \Sc_j(U)^{a_j} \overline{\Sc_j(U)^{b_j}}\, dU  = N_{\mu, \widetilde{\mu}}
\end{equation}
as long as $\max\{\sum_{j=1}^{\ell} ja_j, \sum_{j=1}^{\ell} jb_j\} \le N$, where $\mu$ and $\widetilde{\mu}$ are the partitions with $a_j$ and $b_j$ parts of size $j$, respectively.

Identity \eqref{eq:DG} answered a question raised in \cite{haake1,sommers2}, where it was shown that $\int_{U(N)}\Sc_n(U)dU = 0$ and $\int_{U(N)}|\Sc_n(U)|^2dU = 1$ hold for $1 \le n \le N$. The results in \cite{diaconis2004} inspired the study of pseudomoments of the Riemann zeta function \cite{conrey2006} and were used in \cite{keating2018} to study the variance of the $k$-fold divisor function in short intervals. Recently, Najnudel, Paquette and Simm studied the distribution of $\Sc_n$ with $n$ growing with $N$ \cite{Najnudel}.

In \S\ref{sec:dg} we give a new combinatorial proof of \eqref{eq:DG}, which makes use of the characteristic map.  This is in the spirit of Bump's derivation \cite[Prop.~40.4]{bumplie} of the Diaconis--Shahshahani moment computation \cite{diaconis1994}.

In \S\ref{sec:symmetricpowers} we show that a result similar to \eqref{eq:DG} holds for traces of symmetric powers in place of secular coefficients, with substantially relaxed conditions. These traces are also the complete homogeneous symmetric polynomials $h_n$ evaluated on the eigenvalues of the matrix. This result can be derived from a theorem of Baxter \cite[Prop.~2.11]{johansson1997} but again, our proof is combinatorial in nature.

In \S\ref{sec:magic} we give two evaluations of \eqref{eq:DG} without any restriction on $N$. One evaluation uses the RSK correspondence and generalizes a result of Rains \cite{Rains}, and the second evaluation uses Gelfand--Tsetlin patterns and generalizes an argument of Rodgers \cite{Rodgers}. These evaluations extend to moments of traces of symmetric powers. As an application, we give a new formula for a matrix integral that was considered by Keating, Rodgers, Roditty-Gershon and Rudnick in their study of the $k$-fold divisor function \cite{keating2018}.

In \S\ref{sec:NT} we show how an analogue of our proof of \eqref{eq:DG} has appeared in number-theoretic works of Vaughan and Wooley and of Granville and Soundararajan. The number-theoretic analogues of \eqref{eq:DG} concern moments of character sums on which there are several unconditional results in the literature. However, our result on symmetric traces corresponds to a conjecture we make about moments of sums of the Liouville (or the M\"obius) function twisted by a Dirichlet character, which seems very difficult and reflects the random nature of M\"obius, see Conjecture \ref{conj:mob}. As we explain in \S\ref{sec:NT}, the conjecture suggests that the Steinhaus random multiplicative function is a good model for the Liouville (or the M\"obius) function twisted by a random Dirichlet character: $\lambda \cdot \chi$ where $\chi$ is chosen uniformly at random from the group of Dirichlet characters modulo $q$ ($q \to \infty$) and $\lambda(n)= (-1)^{\sum_{p,\,k\ge 1: p^k \mid n} 1}$.
\section{Proof of the Diaconis--Gamburd Theorem}\label{sec:dg}
\subsection{The symmetric group}
For a permutation $\pi$ we say that $S$ is an invariant set for $\pi$ if $\pi(S)=S$. Equivalently, $S$ is a union of cycles of $\pi$. Given a sequence $\lambda=(\lambda_1,\ldots,\lambda_{\ell})$ of nonnegative integers that sum to $n$, we define the following function on the symmetric group $S_{n}$ acting on $[n]:=\{1,2,\ldots,n\}$:
\begin{equation}
d_{\lambda}(\pi) = \#\{( A_1,\ldots, A_{\ell}) : \cupdot_i A_i = [n],\, \text{each }A_i\text{ is an invariant set with }  |A_i|=\lambda_i\},
\end{equation}
where $\cupdot$ means disjoint union. We use the letter $d$ here as short for \emph{divisor}, as invariant sets for $\pi$ are analogous to divisors of an integer $m$, and $d_{\lambda}$ is analogous to a generalized divisor function over the integers, with divisors localized at certain scales (in the integers we might define $m\mapsto \#\{ (m_1,m_2,\cdots,m_{\ell}): m_1\cdots m_{\ell}=m,\, \log m_i \in [\lambda_i,\lambda_i+1)\}$). The simplest examples are $d_{(n)}$ which is identically $1$, and $d_{(a,n-a)}$ which counts invariant sets of size $a$. 
\begin{remark}
The function $d \colon S_n \to \mathbb{C}$ given by $d:= \sum_{a=0}^{n} d_{(a,n-a)}$ equals the number of invariant sets, namely $d(\pi)=2^{C(\pi)}$ where $C(\pi)$ is the number of cycles of $\pi$; this is the permutation analogue of the divisor function. 
\end{remark}
\begin{remark}
See \cite{EFG} for a recent application of the analogy between invariant sets for a permutation and divisors of an integer.
\end{remark}
Given sequences $\mu$ and $\widetilde{\mu}$ of nonnegative integers summing to $n$, let us define
\begin{equation}\label{eq:form fnr}
	N_{\mu,\widetilde{\mu}}' := \frac{1}{|S_{n}|}\sum_{\pi \in S_{n}} d_{\mu}(\pi)d_{\widetilde{\mu}}(\pi).
\end{equation}

\begin{proposition}\label{prop:avoid knuth}
Suppose $\mu,\widetilde{\mu}\vdash n$. We have $N_{\mu,\widetilde{\mu}}' =N_{\mu,\widetilde{\mu}}$.
\end{proposition}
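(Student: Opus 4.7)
The plan is to evaluate the average on the left-hand side by a double-counting / bijection argument. Unfolding the definition,
\[ |S_n| \cdot N_{\mu,\tilde{\mu}}(n) = \#\{(\pi, (A_i)_{i=1}^{\ell}, (B_j)_{j=1}^{k}) : \pi \in S_n,\, (A_i) \text{ counted by } d_{\mu}(\pi),\, (B_j) \text{ counted by } d_{\tilde{\mu}}(\pi)\}, \]
so the goal is to show this quantity equals $n!\cdot N_{\mu,\tilde{\mu}}$.

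To each such triple I would associate the intersection matrix $C = (c_{i,j})$ defined by $c_{i,j} = |A_i \cap B_j|$. Since each $A_i$ and each $B_j$ is a union of cycles of $\pi$, so is $A_i \cap B_j$, hence $c_{i,j}$ is a non-negative integer. The row sums are $\sum_j c_{i,j} = |A_i| = \mu_i$ and the column sums are $\sum_i c_{i,j} = |B_j| = \tilde{\mu}_j$. Thus $C$ ranges exactly over the set counted by $N_{\mu,\tilde{\mu}}$.

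The main step is to show the fiber over every fixed admissible $C$ has size $n!$. Given $C$, reconstructing a triple amounts to (a) choosing an ordered partition $[n] = \bigsqcup_{i,j} S_{i,j}$ with $|S_{i,j}| = c_{i,j}$, and (b) choosing any permutation of each block $S_{i,j}$; one then sets $A_i = \bigsqcup_j S_{i,j}$, $B_j = \bigsqcup_i S_{i,j}$, and lets $\pi$ act blockwise. Conversely, any triple clearly produces the data $S_{i,j} := A_i \cap B_j$ and the restriction $\pi|_{S_{i,j}}$. The first choice contributes $n!/\prod_{i,j} c_{i,j}!$ ways and the second contributes $\prod_{i,j} c_{i,j}!$ ways, for a total of $n!$ independent of $C$. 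Summing over the $N_{\mu,\tilde{\mu}}$ admissible matrices and dividing by $|S_n| = n!$ gives the result.

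I do not expect any serious obstacle. The only mild points to check are that the cycle-invariance of $A_i$ and $B_j$ really does imply cycle-invariance of $A_i \cap B_j$ (immediate from both being unions of cycles), and that the inverse reconstruction is well-defined and bijective (also immediate, since the data $(S_{i,j}, \pi|_{S_{i,j}})$ is tautologically equivalent to the triple). The argument is in the same spirit as the gcd-matrix counts that the introduction attributes to Vaughan--Wooley and Granville--Soundararajan, with $c_{i,j}$ playing the role of a gcd exponent.
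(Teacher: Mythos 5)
Your proof is correct and follows essentially the same route as the paper's: both arguments hinge on associating to each pair of ordered set partitions the intersection matrix $C=(|A_i\cap B_j|)$, observing that the permutations preserving all $A_i$ and $B_j$ are exactly those preserving each block $A_i\cap B_j$ (contributing $\prod_{i,j}c_{i,j}!$), and cancelling this against the multinomial count $n!/\prod_{i,j}c_{i,j}!$ of set partitions with a given shape. Your fiber-counting phrasing and the paper's interchange-of-summation phrasing are the same computation.
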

\begin{proof}
By definition, given a partition $\lambda= (\lambda_1,\ldots,\lambda_{\ell})\vdash n$ we may express $d_{\lambda}(\pi)$ as a sum over ordered set partitions:
	\begin{equation}\label{eq:alpha pi ais}
	d_{\lambda}(\pi)=\sum_{\substack{(A_1,\ldots,A_\ell): \cupdot A_i = [n]\\ |A_i| = \lambda_i}} \alpha_{A_1,\ldots,A_{\ell}}(\pi)
	\end{equation}
where $\alpha_{A_1,\ldots,A_{\ell}}$ is the indicator function of permutations $\pi\in S_n$ with $\pi(A_i)=A_i$ for all $i$. Applying \eqref{eq:alpha pi ais} with $\lambda=\mu$ and multiplying by \eqref{eq:alpha pi ais} with $\lambda =\widetilde{\mu}$ we obtain
	\begin{equation}
 d_{\mu}(\pi)d_{\widetilde{\mu}}(\pi)=\sum_{\substack{(A_1,\ldots,A_{\ell(\mu)})\\ \cupdot A_i = [n] \\ |A_i| = \mu_i}} \sum_{\substack{(B_1,\ldots,B_{\ell(\widetilde{\mu})})\\ \cupdot B_i = [n]\\ |B_i| = \widetilde{\mu}_i}} \alpha_{A_1,\ldots, A_{\ell(\mu)}}(\pi)  \alpha_{B_1,\ldots, B_{\ell(\widetilde{\mu})}}(\pi)
	\end{equation}
where $\ell(\lambda)$ is the number of parts in a partition. Averaging this over $S_n$ and interchanging the order of summation, we find
 	\begin{equation}\label{eq:inter}
 	N_{\mu,\widetilde{\mu}}' = \frac{1}{n!} \sum_{\substack{(A_1,\ldots,A_{\ell(\mu)})\\ \cupdot A_i = [n]\\ |A_i| = \mu_i}} \sum_{\substack{(B_1,\ldots,B_{\ell(\widetilde{\mu})})\\ \cupdot B_i = [n]\\ |B_i| = \widetilde{\mu}_i}} \sum_{\pi \in S_{n}} \alpha_{A_1,\ldots, A_{\ell(\mu)}}(\pi)  \alpha_{B_1,\ldots, B_{\ell(\widetilde{\mu})}}(\pi).
 \end{equation}
The inner sum in the right-hand side of \eqref{eq:inter} counts permutations $\pi\in S_n$ for which $A_i$ are invariant sets, as well as the $B_j$. In particular $\pi(A_i \cap B_j) \subseteq A_i, B_j$, forcing $\pi(A_i \cap B_j) = A_i \cap B_j$. Conversely, given a permutation such that $\pi(A_i \cap B_j) = A_i\cap B_j$ for all $i$ and $j$, it necessarily satisfies $\pi(A_i)=A_i$ and $\pi(B_j)=B_j$ for all $i$ and $j$. Thus, the inner sum counts $\pi$s with $\pi(A_i \cap B_j) = A_i \cap B_j$. The sets $A_i \cap B_j$ ($1 \le i \le \ell(\mu)$, $1 \le j \le \ell(\widetilde{\mu})$) are pairwise disjoint and their union is $[n]$, and so such $\pi$s are determined uniquely by their restrictions to $A_i \cap B_j$, which may be arbitrary, proving that the inner sum is $\prod_{i,j=1}^{n} |A_i \cap B_j|!$. Hence,
	\begin{equation}\label{eq:sum with magic}
	 	N_{\mu,\widetilde{\mu}}' = \frac{1}{n!} \sum_{\substack{(A_1,\ldots,A_{\ell(\mu)})\\ \cupdot A_i = [n]\\ |A_i| = \mu_i}} \sum_{\substack{(B_1,\ldots,B_{\ell(\widetilde{\mu})})\\ \cupdot B_i =[n] \\  |B_i| = \widetilde{\mu}_i}} \prod_{i,j} |A_i \cap B_j|!.
	\end{equation}
	Observe that the $n \times m$ matrix $C=(|A_i \cap B_j|)$ has $\mathrm{row}(C) =\mu$ and $\mathrm{col}(C) = \widetilde{\mu}$. Hence
	\begin{equation}
		 	N_{\mu,\widetilde{\mu}}'  = \frac{1}{n!}\sum_{\substack{C=(c_{i,j}) \text{ a matrix }\\\text{counted by }N_{\mu,\widetilde{\mu}}}} \prod_{i,j} c_{i,j}! \cdot \# \{ [n] = \cupdot_{i,j}C_{i,j}, \, |C_{i,j}| = c_{i,j}\}.
	\end{equation}
The inner expression in the right-hand side is the
 number of ordered set partitions of $[n]$ into subsets $C_{i,j}$ of size $c_{i,j}$ (these sets correspond to $A_i \cap B_j$ and one reconstructs $A_i$ by $A_i = \cup_{j} C_{i,j}$ and similarly $B_j = \cup_{i} C_{i,j}$). This is just the
 multinomial \[\binom{n}{(c_{i,j}): 1 \le i \le \ell(\mu),\, 1 \le j \le \ell(\widetilde{\mu})} = \frac{n!}{\prod_{i,j} c_{i,j}!},\]
so that \eqref{eq:sum with magic} simplifies to
	\begin{equation}
	N_{\mu,\widetilde{\mu}}'  = \sum_{\substack{C \text{ a matrix }\\\text{counted by }N_{\mu,\widetilde{\mu}}}} 1 = N_{\mu,\widetilde{\mu}} 
\end{equation}
	as claimed.
\end{proof}
In the simple case $\widetilde{\mu}=(n)$ , Proposition~\ref{prop:avoid knuth} reduces to $\sum_{\pi \in S_n} d_{\mu}(\pi)/|S_n|=1$.
\subsection{The characteristic map}
Endow $S_n$ with the uniform probability measure. The characteristic (or Frobenius) map  $\mathrm{Ch}^{(N)}$ is a linear map from class functions on $S_n$ to class functions on $U(N)$, with the property that if $n \le N$ then it is an isometry with respect to the $L_2$-norm, see \cite[Thm.~40.1]{bumplie}. It may be given by
\begin{equation}
\mathrm{Ch}^{(N)}(f) = \frac{1}{n!} \sum_{ \pi \in S_n} f(\pi) p_{\lambda(\pi)},
\end{equation}
see \cite[Thm.~39.1]{bumplie}. Here $\lambda(\pi)$ is the partition associated with $\pi$ (the nondecreasing sequence, summing to $n$, of integers corresponding to the cycle sizes in $\pi$), and $p_{\lambda}$ is the power sum symmetric polynomial associated with $\lambda$, evaluated at the eigenvalues of $U \in U(N)$.
\begin{lem}\label{lem:ch}
Suppose $\lambda \vdash n$. We have
\begin{equation}
	\mathrm{Ch}^{(N)}(\sgn \cdot d_{\lambda}) = e_{\lambda},
\end{equation}	
where $\sgn$ is the sign representation and $e_{\lambda}$ is the elementary symmetric polynomial associated with the partition $\lambda$.
\end{lem}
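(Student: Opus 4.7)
The plan is to unfold the definition of $\mathrm{Ch}^{(N)}$ applied to $\sgn \cdot d_\lambda$, exchange the order of summation so that the partition-counting indicators come outside, and then exploit the fact that a permutation which preserves a prescribed ordered partition of $[n]$ into blocks of sizes $\lambda_1,\dots,\lambda_\ell$ factors canonically as a product of permutations of the blocks.

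First I would use \eqref{eq:alpha pi ais} to write
\begin{equation*}
\mathrm{Ch}^{(N)}(\sgn \cdot d_\lambda) = \frac{1}{n!} \sum_{\substack{(A_1,\ldots,A_\ell)\\ \cupdot A_i=[n]\\ |A_i|=\lambda_i}} \sum_{\substack{\pi \in S_n\\ \pi(A_i)=A_i\, \forall i}} \sgn(\pi)\, p_{\lambda(\pi)}.
\end{equation*}
A permutation $\pi$ preserving each $A_i$ restricts to a permutation $\pi_i$ of $A_i$, and this gives a bijection from such $\pi$ to $\prod_i S(A_i)$. Under this bijection, both the sign and the cycle type behave multiplicatively: $\sgn(\pi)=\prod_i \sgn(\pi_i)$ and $p_{\lambda(\pi)} = \prod_i p_{\lambda(\pi_i)}$. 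Hence the inner sum factors as $\prod_{i=1}^\ell \sum_{\pi_i \in S(A_i)} \sgn(\pi_i) p_{\lambda(\pi_i)}$.

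Next I would invoke the classical identity
\begin{equation*}
\frac{1}{m!}\sum_{\sigma \in S_m} \sgn(\sigma)\, p_{\lambda(\sigma)} = e_m,
\end{equation*}
which is nothing but $\mathrm{Ch}^{(N)}(\sgn) = s_{(1^m)} = e_m$, a special case of the identity $\mathrm{Ch}^{(N)}(\chi^\lambda)=s_\lambda$ recorded for instance in \cite[Thm.~38.3]{bumplie}. Applying this to each factor with $m=\lambda_i$ turns the inner double sum into $\bigl(\prod_i \lambda_i!\bigr)\, e_\lambda$, which is independent of the particular choice of $(A_1,\ldots,A_\ell)$.

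Finally I would count the ordered set partitions: the number of tuples $(A_1,\ldots,A_\ell)$ with $\cupdot A_i=[n]$ and $|A_i|=\lambda_i$ is the multinomial coefficient $n!/\prod_i \lambda_i!$. Substituting gives
\begin{equation*}
\mathrm{Ch}^{(N)}(\sgn \cdot d_\lambda) = \frac{1}{n!}\cdot \frac{n!}{\prod_i \lambda_i!}\cdot \Bigl(\prod_i \lambda_i!\Bigr) e_\lambda = e_\lambda,
\end{equation*}
as required. There is no real obstacle here; the only non-trivial input is the identity $\mathrm{Ch}(\sgn)=e_n$, and the rest is the multiplicativity of $\sgn$ and $p_{\lambda(\cdot)}$ across block-preserving factorizations together with a multinomial count.
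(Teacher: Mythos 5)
Your proposal is correct and follows essentially the same route as the paper: unfold the definition via \eqref{eq:alpha pi ais}, swap the order of summation, factor the inner sum over block-preserving permutations using multiplicativity of $\sgn$ and $p_{\lambda(\cdot)}$, apply the Newton--Girard identity $\sum_{\sigma\in S_m}\sgn(\sigma)p_{\lambda(\sigma)}/m! = e_m$ to each block, and finish with the multinomial count of ordered set partitions. No gaps.
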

\begin{proof}
Given $\pi \in S_{n}$, we set $p_{\pi} = p_{\lambda(\pi)}$. We denote by $\ell(\lambda)$ the number of parts in $\lambda$. We then have, by plugging \eqref{eq:alpha pi ais} in the definition of $	\mathrm{Ch}^{(N)}(\sgn \cdot d_{\lambda})$ and interchanging order of summation,
\begin{equation}
\mathrm{Ch}^{(N)}(\sgn \cdot d_{\lambda}) = \frac{1}{n!} \sum_{\substack{(A_1,\ldots,A_{\ell(\lambda)}): \cupdot A_i = [n]\\  \forall i:\, |A_i| = \lambda_i}} \sum_{\substack{\pi \in S_{n}\\   \forall i:\,\pi(A_i)=A_i}}\sgn(\pi)  p_{\pi}.
\end{equation}
We claim that the inner sum is $e_{\lambda}$. Indeed, since $\pi$ is determined by the restrictions $\pi|_{A_i}$, and since $p_{\lambda} = \prod_i p_{\lambda_i}$, we have 
\begin{equation}
\sum_{\substack{\pi \in S_{n}\\  \forall i:\,\pi(A_i)=A_i}} \sgn(\pi)p_{\pi} = \prod_{i=1}^{\ell(\lambda)} \left( \sum_{\pi_i \in S_{A_i}} \sgn(\pi) p_{\pi_i} \right) = \prod_{i=1}^{\ell(\lambda)} \lambda_i! e_{\lambda_i} ,
\end{equation}
where the last equality follows from the Newton--Girard identity $\sum_{\pi \in S_m} \sgn(\pi)p_{\pi}/m! = e_{m}$. 
To finish, note that the number of ordered set partitions of $[n]$ into $\ell(\lambda)$ sets of sizes $(\lambda_i)_{i=1}^{\ell(\lambda)}$ is exactly the binomial coefficient $\binom{n}{\lambda_1,\ldots, \lambda_{\ell(\lambda)}}$.
\end{proof}
\subsection{Conclusion of proof}
Here we establish \eqref{eq:DG}. Let $(a_1,\ldots,a_{\ell})$ and $(b_1,\ldots,b_{\ell})$ be sequences of nonnegative integers satisfying the condition $\max\{\sum_{j=1}^{\ell} ja_j, \sum_{j=1}^{\ell} jb_j\} \le N$. Let $\mu$ and $\widetilde{\mu}$ be the partitions with $a_j$ and $b_j$ parts of size $j$, respectively. 

If $\sum_j j a_j \neq \sum_j j b_j$, it is easy to see that both sides of \eqref{eq:DG} vanish. Indeed, for the right-hand side, note that the integrand is an homogeneous polynomial in the eigenvalues fof $U$, whose degree is nonzero, so its integral must vanish by translation-invariance of the Haar measure. On the other hand, if $N_{\mu,\widetilde{\mu}}$ is nonzero, we must have that $\mu$ and $\widetilde{\mu}$ sum to the same number (if $A=(a_{i,j})$ is a matrix counted by $	N_{\mu,\widetilde{\mu}}$ then both $\mu$ and $\widetilde{\mu}$ sum to $\sum_{i,j} a_{i,j})$.

Now assume $\sum_j j a_j = \sum_j j b_j =n \le N$. As $\prod_{j}\Sc_j(U)^{a_j} \overline{\Sc_j(U)^{b_j}} = {e}_{\mu} \overline{{e}_{\widetilde{\mu}}}$ by definition, the fact that $\mathrm{Ch}^{(N)}$ is an isometry if $n \le N$ shows, through Lemma~\ref{lem:ch}, that the integral in \eqref{eq:DG} is equal to 
\begin{equation}\label{eq:2}
 \frac{1}{|S_{n}|}\sum_{\pi \in S_{n}} (\sgn \cdot d_{\mu})(\pi)\overline{\sgn \cdot d_{\widetilde{\mu}}}(\pi) =  \frac{1}{|S_{n}|}\sum_{\pi \in S_{n}} d_{\mu}(\pi) d_{\widetilde{\mu}}(\pi)=N_{\mu,\widetilde{\mu}}',
\end{equation}
and the proof is concluded by applying Proposition~\ref{prop:avoid knuth}.
\section{Symmetric powers}\label{sec:symmetricpowers}
Let $\Tr \Sym^n(U)$ be the trace of the $n$th symmetric power of $U \in U(N)$. This is also the $n$th complete homogeneous symmetric polynomial $h_n$ evaluated on the eigenvalues of $U$.
\begin{lem}\label{lem:trsym}
Let $( a_j)_{j=1}^{\ell}$, $(b_j)_{j=1}^{\ell}$ be sequences of nonnegative integers. We have
\begin{equation}\label{eq:trsym}
\int_{U(N)} \prod_{j=1}^{\ell} (\Tr \Sym^j(U))^{a_j} \overline{(\Tr \Sym^j(U))^{b_j}}\, dU  = N_{\mu, \widetilde{\mu}}
\end{equation}
as long as $\min \{\sum_{j=1}^{\ell} a_j,\sum_{j=1}^{\ell} b_j\} \le N$, where $\mu$ and $\widetilde{\mu}$ are the partitions with $a_j$ and $b_j$ parts of size $j$, respectively. 
\end{lem}
We start with the following corollary of Lemma~\ref{lem:ch}.
\begin{cor}\label{cor:inv}
	Suppose $\lambda \vdash n$. We have $		\mathrm{Ch}^{(N)}( d_{\lambda}) = h_{\lambda}$.
\end{cor}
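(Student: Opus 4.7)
The idea is to replay the proof of Lemma~\ref{lem:ch} essentially verbatim, with the sign character removed throughout. Expanding $d_\lambda$ via \eqref{eq:alpha pi ais} and interchanging the order of summation gives
\[ \mathrm{Ch}^{(N)}(d_\lambda) = \frac{1}{n!} \sum_{\substack{(A_1,\ldots,A_{\ell(\lambda)}): \cupdot A_i = [n]\\ |A_i| = \lambda_i}} \sum_{\substack{\pi \in S_n\\ \pi(A_i)=A_i}} p_\pi. \]
Since a permutation fixing each $A_i$ setwise is determined by its restrictions $\pi|_{A_i}$ and $p_\pi$ factorizes across these restrictions, the inner sum equals $\prod_{i=1}^{\ell(\lambda)} \sum_{\pi_i \in S_{A_i}} p_{\pi_i} = \prod_i \lambda_i!\, h_{\lambda_i}$. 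The last equality is the unsigned companion of the Newton--Girard identity used in Lemma~\ref{lem:ch}, namely $h_m = \frac{1}{m!}\sum_{\pi \in S_m} p_{\lambda(\pi)}$; this follows at once from the classical expansion $h_m = \sum_{\lambda \vdash m} z_\lambda^{-1} p_\lambda$ and the fact that the cycle type $\lambda$ contributes $m!/z_\lambda$ permutations to $S_m$.

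To close, the number of ordered set partitions of $[n]$ into pieces of sizes $\lambda_1,\ldots,\lambda_{\ell(\lambda)}$ is the multinomial $n!/\prod_i \lambda_i!$, which cancels both the factor $1/n!$ outside the double sum and the $\prod_i \lambda_i!$ produced by the inner sum, leaving $\prod_i h_{\lambda_i} = h_\lambda$, as required.

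There is essentially no obstacle: the argument is a direct transcription of Lemma~\ref{lem:ch} with $e$ replaced by $h$, the only new analytic input being the unsigned Newton--Girard identity. As an alternative route, one may pass to the ring $\Lambda$ of symmetric functions in infinitely many variables and apply the involution $\omega$ that sends $e_i \mapsto h_i$ and $p_i \mapsto (-1)^{i-1}p_i$ (so that $\omega(p_\pi) = \sgn(\pi)\, p_\pi$ for $\pi \in S_n$); the identity $\mathrm{Ch}(\sgn \cdot d_\lambda) = e_\lambda$ of Lemma~\ref{lem:ch} then transforms under $\omega$ into $\mathrm{Ch}(d_\lambda) = h_\lambda$, which yields the corollary after specialization to $N$ variables.
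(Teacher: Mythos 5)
Your proof is correct and follows the same lines as the paper: the paper's own justification of this corollary consists of exactly the two routes you give, namely the involution $\omega$ (denoted $\iota$ there) applied to Lemma~\ref{lem:ch}, and the option of rerunning the proof of Lemma~\ref{lem:ch} with the unsigned identity $h_m = \frac{1}{m!}\sum_{\pi\in S_m}p_{\lambda(\pi)}$. You have merely swapped which of the two is primary and which is the alternative, and filled in the routine details; both are fine.
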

\begin{proof}
This follows from Lemma~\ref{lem:ch} through the existence of an involution $\iota$ on the space of symmetric polynomials, with the properties $\iota(\mathrm{Ch}^{(N)}(f))=\mathrm{Ch}^{(N)}(\sgn \cdot f)  $ \cite[Thm.~39.3]{bumplie} and $\iota(e_{\lambda}) = h_{\lambda}$ \cite[Thm.~36.3]{bumplie}. Alternatively, one may repeat the proof of Lemma~\ref{lem:ch} with the Newton--Girard identity $\sum_{\pi \in S_m} p_{\pi}/m! = h_{m}$.
\end{proof}
At this point we can deduce \eqref{eq:trsym} in the restricted range $\max\{\sum_{j=1}^{\ell} ja_j$, $\sum_{j=1}^{\ell} jb_j \}\le N$, in the same way we proved \eqref{eq:DG}.

Next we prove the following well-known identity, often proved as a consequence of the RSK correspondence. Recall that for given partitions $\lambda$ and $\mu$, the Kostka number $K_{\lambda,\mu}$ is defined as the number of semistandard Young tableaux (SSYTs) of shape $\lambda$ and weight $\mu$ \cite[\S2.3]{diaconis2004}.
\begin{lem}
Given $\mu,\widetilde{\mu} \vdash n$ we have
\begin{equation}\label{eq:kostkaiden}
 \sum_{\lambda \vdash n} K_{\lambda,\mu}K_{\lambda,\widetilde{\mu}} = N_{\mu,\widetilde{\mu}}.
\end{equation}
\end{lem}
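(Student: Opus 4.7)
My plan is to extract this identity from the Cauchy identity by computing a coefficient in two different ways. Introduce two finite alphabets $x=(x_1,\ldots,x_m)$ and $y=(y_1,\ldots,y_n)$ with $m\ge \ell(\mu)$ and $n\ge \ell(\tilde{\mu})$, and start from
\[
\prod_{i=1}^{m}\prod_{j=1}^{n}\frac{1}{1-x_i y_j} \;=\; \sum_{\lambda} s_\lambda(x)\, s_\lambda(y),
\]
where $s_\lambda$ is the Schur polynomial (only $\lambda$ with $\ell(\lambda)\le \min(m,n)$ contribute nontrivially).

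For the left-hand side, expanding each factor as a geometric series yields
\[
\prod_{i,j}\frac{1}{1-x_i y_j} \;=\; \sum_{A=(a_{i,j})\in \ZZ_{\ge 0}^{m\times n}} \prod_{i,j} x_i^{a_{i,j}}\, y_j^{a_{i,j}}.
\]
The exponent of $x_i$ in a given term is $\mathrm{row}(A)_i$ and the exponent of $y_j$ is $\mathrm{col}(A)_j$, so the coefficient of the monomial $x_1^{\mu_1}\cdots x_m^{\mu_m}\, y_1^{\tilde{\mu}_1}\cdots y_n^{\tilde{\mu}_n}$ is exactly the number of non-negative integer matrices with row sums $\mu$ and column sums $\tilde{\mu}$, namely $N_{\mu,\tilde{\mu}}$.

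For the right-hand side, I would use the Kostka expansion $s_\lambda = \sum_\nu K_{\lambda,\nu}\, m_\nu$ of Schur polynomials in terms of monomial symmetric polynomials. Because $\mu$ is a partition, the monomial $x_1^{\mu_1}\cdots x_m^{\mu_m}$ has weakly decreasing exponents, and hence it appears with coefficient $1$ in $m_\mu(x)$ and coefficient $0$ in every other $m_\nu(x)$. Applying the same observation to the $y$-variables, the coefficient of $x^\mu y^{\tilde{\mu}}$ in $\sum_\lambda s_\lambda(x)s_\lambda(y)$ is $\sum_\lambda K_{\lambda,\mu}\, K_{\lambda,\tilde{\mu}}$. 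Equating the two coefficient computations gives \eqref{eq:kostkaiden}.

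The argument is a bookkeeping exercise once the Cauchy identity is in hand, so there is no real obstacle; the only point needing care is matching monomials, which relies on the fact that a monomial with weakly decreasing exponents uniquely determines the partition indexing the monomial symmetric polynomial containing it. An entirely parallel route, bypassing generating functions, is the bijective one alluded to in the paper: RSK puts non-negative integer matrices with row sums $\mu$ and column sums $\tilde{\mu}$ in bijection with pairs $(P,Q)$ of semistandard Young tableaux of a common shape $\lambda$, with contents $\tilde{\mu}$ and $\mu$ respectively, and the latter are enumerated by $\sum_\lambda K_{\lambda,\mu} K_{\lambda,\tilde{\mu}}$.
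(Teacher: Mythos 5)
Your proof is correct, but it takes a genuinely different route from the paper's. The paper deduces \eqref{eq:kostkaiden} from results it has already established: it expands $e_{\mu}=\sum_{\lambda}K_{\lambda',\mu}s_{\lambda}$, applies orthogonality of Schur functions over $U(n)$ (with $n=|\mu|$, so no partition is truncated) to evaluate $\int_{U(n)}\prod_j \Sc_j(U)^{a_j}\overline{\Sc_j(U)^{b_j}}\,dU$ as $\sum_{\lambda}K_{\lambda,\mu}K_{\lambda,\tilde{\mu}}$ after reindexing by conjugation, and then invokes its own combinatorial proof of \eqref{eq:DG} to identify that same integral with $N_{\mu,\tilde{\mu}}$. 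Your argument instead stays entirely within symmetric function theory: the Cauchy kernel $\prod_{i,j}(1-x_iy_j)^{-1}=\sum_{\lambda}s_{\lambda}(x)s_{\lambda}(y)$, expanded once as a geometric series (giving the matrix count $N_{\mu,\tilde{\mu}}$ as the coefficient of $x^{\mu}y^{\tilde{\mu}}$) and once via $s_{\lambda}=\sum_{\nu}K_{\lambda,\nu}m_{\nu}$ (giving $\sum_{\lambda}K_{\lambda,\mu}K_{\lambda,\tilde{\mu}}$). Your handling of the finite alphabets is sound: since $K_{\lambda,\mu}\neq 0$ forces $\ell(\lambda)\le\ell(\mu)\le m$ and likewise for $\tilde{\mu}$, the truncation of the Cauchy sum to $\ell(\lambda)\le\min(m,n)$ loses nothing, and the matrix count is unaffected by padding with zero rows or columns. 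What each approach buys: yours is logically independent of the random matrix computation (it is the classical textbook proof, essentially a shadow of RSK, which you also correctly cite as an alternative), whereas the paper's is a two-line corollary of machinery it needs anyway and emphasizes the link between the magic-square count and the matrix integral. The only cosmetic blemish in your write-up is reusing $n$ both for $|\mu|$ and for the size of the $y$-alphabet.
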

\begin{proof}
We may expand $e_{\mu}$ and $e_{\widetilde{\mu}}$ in the Schur basis, see \cite[p.~335]{stanley}:
\begin{equation}\label{eq:exps}
 e_{\mu} = \sum_{\lambda\vdash n} K_{\lambda', \mu} s_{\lambda}, \qquad e_{\widetilde{\mu}} = \sum_{\lambda\vdash n} K_{\lambda', \widetilde{\mu}} s_{\lambda},
\end{equation}
where $\lambda'$ is the conjugate of $\lambda$. Let $a_j$ and $b_j$ be the number of $j$s in $\mu$ and $\widetilde{\mu}$, respectively. Orthogonality of Schur functions \cite[Eq.~(22)]{diaconis2004} implies that
\[	\int_{U(n)} \prod_{j=1}^{\ell} \Sc_j(U)^{a_j} \overline{\Sc_j(U)^{b_j}}\, dU  = \sum_{\lambda \vdash n} K_{\lambda',\mu}K_{\lambda',\widetilde{\mu}}=\sum_{\lambda \vdash n} K_{\lambda,\mu}K_{\lambda,\widetilde{\mu}}.\]
On the other hand, this integral was shown to equal $N_{\mu,\widetilde{\mu}}$ in \eqref{eq:DG}.
\end{proof}
We now prove Lemma~\ref{lem:trsym}. 
\begin{proof}
The case $\sum_{j=1}^{\ell} ja_j\neq \sum_{j=1}^{\ell} jb_j$ is treated as in the secular coefficients case. Next, assume that $\sum_j ja_j=  \sum_j jb_j =n$ and $\min \{\sum_{j=1}^{\ell} a_j,\sum_{j=1}^{\ell} b_j\} \le N$. The multiset of eigenvalues of $\Tr \Sym^j (U)$ consists of products of $j$ eigenvalues of $U$, and so the integrand in the left-hand side of \eqref{eq:trsym} is $h_{\mu}\overline{h_{\widetilde{\mu}}}$. We may expand $h_{\lambda}$ in the Schur basis, see Stanley \cite[Cor.~7.12.4]{stanley}:
\[ h_{\mu} = \sum_{\lambda\vdash n} K_{\lambda, \mu} s_{\lambda}.\]
Orthogonality of Schur functions implies that the left-hand side of \eqref{eq:trsym} is 
\begin{equation}\label{eq:partialsum}
\sum_{\substack{\lambda \vdash n\\\ell(\lambda)\le N}} K_{\lambda, \mu} K_{\lambda, \widetilde{\mu}}.
\end{equation}
We claim $K_{\lambda,\mu} \neq 0$ implies $\ell(\lambda) \le \ell(\mu)$ (see e.g.~\cite[Prop.~7.10.5]{stanley}). This follows from the definition of Kostka numbers: the first column of an SSYT counted by $K_{\lambda,\mu}$ contains $\ell(\lambda)$ increasing positive integers less than or equal to $\ell(\mu)$, hence the implication. 

As $\min \{\ell(\mu),\ell(\widetilde{\mu})\} = \min \{ \sum_j a_j, \sum_j b_j \} \le N$ by assumption, we deduce \eqref{eq:partialsum} is equal to the full sum $\sum_{\lambda \vdash n} K_{\lambda, \mu} K_{\lambda, \widetilde{\mu}}$ and the proof is concluded by \eqref{eq:kostkaiden}.
\end{proof}
\begin{remark}
Lemma~\ref{lem:trsym} may also be derived from a  theorem of Baxter on Toeplitz determinants for certain generating functions \cite{Baxter1961} (cf.~\cite[Prop.~2.11]{johansson1997}), special cases of which appeared in earlier works of Szeg\H{o} and Onsager.  Concretely, Baxter proved that (originally in the language of Toeplitz determinants) if $\min\{\ell,m\} \le N$ then
\begin{equation}\label{eq:Baxter}
\int_{U(N)} \frac{1}{\prod_{j=1}^{\ell}\det(I-a_j U)}\frac{1}{\prod_{i=1}^{m}\det(I-b_i \overline{U})}dU = \prod_{i=1}^{m} \prod_{j=1}^{\ell}\frac{1}{1-a_j b_i}
\end{equation}
for complex $|a_j|<1$ and $|b_i|<1$.
Expanding the rational functions in both sides as power series and comparing coefficients, one obtains Lemma~\ref{lem:trsym}.
In a sense, the appearance of magic squares in random matrix theory could have been anticipated due to \eqref{eq:Baxter}.
\end{remark}
\begin{remark}
A weaker version of Lemma~\ref{lem:trsym}, with $\max$ in place of $\min$, may be derived from formulas for averages of ratios of characteristic polynomials \cite{conrey2005howe,bump}.
\end{remark}
\begin{remark}
See \cite[Thm.~1.6]{Najnudel} for a variant of Lemma~\ref{lem:trsym} and \eqref{eq:Baxter}, where one works in the circular $\beta$-ensemble and takes $N \to \infty$.
\end{remark}
\section{General moments}\label{sec:magic}
Given a matrix with nonnegative integer entries, an \textit{SE-chain} is a sequence of entries in which each entry is located weakly to the right of and weakly below the preceding entry. The \textit{length} of an SE-chain is defined as the sum of elements in it. An \textit{ne-chain} is a sequence of nonzero entries in which each entry is  strictly to the right of and strictly above the preceding entry. The \textit{length} of an ne-chain is defined as the number of  elements in it.

The RSK correspondence is a bijection from the set of matrices with nonnegative integers to the set $\{ (P_1,P_2): P_i \text{ are SSYTs with the same shape}\}$, see \cite[Ch.~7.11]{stanley} for its description. The bijection takes a matrix $A$ to a pair $(P_1,P_2)$ where the weight of $P_1$ is $\mathrm{row}(A)$ and the weight of $P_2$ is $\mathrm{col}(A)$. We denote by $\lambda$ the common shape of $P_1$ and $P_2$. A theorem of Schensted  \cite{schensted1961longest} (cf.~Theorem 8 of Krattenthaler \cite{K} with $k=1$) tells us that the largest part of $\lambda$ (resp.~the number of parts in $\lambda$) equals the length of the longest  SE-chain (resp.~ne-chain) of $A$.
\begin{proposition}\label{prop:1steval}
Let $N \ge 1$. Given sequences $(a_1,\ldots,a_{\ell})$ and $(b_1,\ldots,b_{\ell})$ of nonnegative integers, let $\mu$ and $\widetilde{\mu}$ be the partitions with $a_j$ and $b_j$ parts of size $j$, respectively. The integral
\begin{equation}\label{eq:intsc}
	\int_{U(N)} \prod_{j=1}^{\ell} \Sc_j(U)^{a_j} \overline{\Sc_j(U)^{b_j}}\, dU
\end{equation}
is equal to the the number of $\ell(\mu) \times \ell(\widetilde{\mu})$ matrices $A$ with nonnegative integer entries such that $\mathrm{row}(A)=\mu$, $\mathrm{col}(A) = \widetilde{\mu}$ and the longest SE-chain in $A$ has length $\le N$. The integral
\begin{equation}\label{eq:inttc}
\int_{U(N)} \prod_{j=1}^{\ell} (\Tr \Sym^j(U))^{a_j} \overline{(\Tr \Sym^j(U))^{b_j}}\, dU
\end{equation}
is equal to the the number of $\ell(\mu) \times \ell(\widetilde{\mu})$ matrices $A$ with nonnegative integer entries such that $\mathrm{row}(A)=\mu$, $\mathrm{col}(A) = \widetilde{\mu}$ and the longest ne-chain in $A$ has length $\le N$.
\end{proposition}
Proposition~\ref{prop:1steval} generalizes \eqref{eq:DG} and Lemma~\ref{lem:trsym}. If $a_j=b_j=0$ for $j\ge 2$ and $a_1=b_1=n$, Proposition~\ref{prop:1steval} shows 
\begin{equation}
	\int_{U(N)} |\mathrm{Tr}(U)^n|^2\, dU
\end{equation}
is equal to the number of permutations in $S_n$ with longest increasing subsequence of length $\le N$, a result of Rains \cite{Rains}.
\begin{proof}[Proof of Proposition~\ref{prop:1steval}]
The proof of the evaluation of \eqref{eq:intsc} is almost the same as the proof of \eqref{eq:DG} in \cite{diaconis2004}, where $N \ge \max\{\sum_j j a_j, \sum_j jb_j\}$ was imposed. Instead of imposing this we invoke Schensted's theorem at the end of the proof:

As in the proof of \eqref{eq:DG} we may assume $\sum_j ja_j = \sum_j jb_j=n$ for some  $n\ge 0$. Since $\prod_{j=1}^{\ell} \Sc_j(U)^{a_j}=e_{\mu}(U)$ and $\prod_{j=1}^{\ell} \Sc_j(U)^{b_j}=e_{\widetilde{\mu}}(U)$, the expansions in \eqref{eq:exps} together with orthogonality of Schur functions \cite[Eq.~(22)]{diaconis2004} imply that \eqref{eq:intsc} equals
\[ \sum_{\substack{\lambda \vdash n \\ \ell(\lambda) \le N}} K_{\lambda',\mu}K_{\lambda',\widetilde{\mu}}=\sum_{\substack{\lambda \vdash n \\ \lambda_1 \le N}} K_{\lambda,\mu}K_{\lambda,\widetilde{\mu}},\]
i.e.~the number of pairs $(P,Q)$ of SSYTs where $P$ has weight $\mu$, $Q$ has weight $\widetilde{\mu}$, and $P$ and $Q$ have a common shape $\lambda \vdash n$ such that $\lambda_1 \le N$. By the RSK correspondence and Schensted's theorem \cite{schensted1961longest}, such pairs are in one-to-one correspondence with the matrices described in the first part of the proposition.

Now consider \eqref{eq:inttc}. As we saw in the proof of Lemma~\ref{lem:trsym}, \eqref{eq:inttc} is equal to the sum in \eqref{eq:partialsum}, i.e.~the number of pairs $(P,Q)$ of SSYTs where $P$ has weight $\mu$, $Q$ has weight $\widetilde{\mu}$, and $P$ and $Q$ have a common shape $\lambda$ such that $\lambda \vdash n$ and $\ell(\lambda) \le N$. By the RSK correspondence and Schensted's theorem \cite{schensted1961longest}, such pairs are in one-to-one correspondence with the matrices described in the second part of the proposition.
\end{proof} 
Let 
\[ I_k(n;N) := \int_{U(N)}|[u^n]\det(I+uU)^k|^2\, dU = \int_{U(N)}\left|\sum_{j_1+\ldots+j_k=n}\prod_{i=1}^{k} \Sc_{j_i}(U)\right|^2\, dU.\]
Summing \eqref{eq:intsc} over all sequences $(a_j)_j$ and $(b_j)_j$ of nonnegative integers such that $\sum_j ja_j=\sum_j b_j=n$, $\sum_j a_j = \sum_j b_j=k$ and applying the first part of Proposition~\ref{prop:1steval}, we obtain
\begin{cor}\label{cor:inter}
Let $n,k,N\ge 1$. Then $I_k(n;N)$ is equal to the number of $k\times k$ matrices with nonnegative integer entries whose sum is $n$, and their longest SE-chain has length $\le N$.
\end{cor}
The integral $I_k(n;N)$ was studied extensively in \cite{keating2018}. According to Theorem~1.5 of \cite{keating2018}, 
\[ I_k(n;N) = N^{k^2-1}(\gamma_k(n/N)+O_k(1/N)) \]
holds uniformly for $n,N\ge 1$, where $\gamma_k\colon \RR_{\ge 0} \to \RR_{\ge 0}$ is an explicit function  supported on $[0,k]$ \cite[Eq.~(1.12)]{keating2018}. Thus, in view of Corollary~\ref{cor:inter}, if we pick uniformly at random a $k\times k$ matrix with nonnegative integer entries that sum to $n$, the probability that its longest SE-chain has length $\le N$ can be shown to be
\[ \frac{I_k(n;N)}{I_k(n;n)} = \frac{\gamma_k(n/N)}{\gamma_k(1)(n/N)^{k^2-1}}+O_k(1/n).\]
Theorem~1.4 of \cite{keating2018} (cf.~\cite[Thm.~1.5]{Rodgers}) shows $I_k(n;N)$ equals the number of arrays $(x_{i,j})_{1\le i,j\le k}$ of nonnegative integers satisfying $x_{1,1}\le N$, $\sum_{i=1}^{k} x_{i,i} = n$, and $x_{i,j}$ is weakly decreasing when either $i$ or $j$ is fixed. We give a similar description of the integrals in Proposition~\ref{prop:1steval} using an idea of Rodgers \cite[p.~1270]{Rodgers}.
\begin{proposition}\label{prop:2ndeval}
	Let $N \ge 1$. Given sequences $(a_1,\ldots,a_{\ell})$ and $(b_1,\ldots,b_{\ell})$ of nonnegative integers, let $\mu$ and $\widetilde{\mu}$ be the partitions with $a_j$ and $b_j$ parts of size $j$, respectively. 	The integral \eqref{eq:intsc} (resp.~\eqref{eq:inttc}) equals the number of arrays $(x_{i,j})_{1\le i\le \ell(\mu),\,1\le j \le \ell(\widetilde{\mu})}$ of nonnegative integers satisfying each of the following conditions:
	\begin{enumerate}
		\item $x_{1,1} \leq N$ (resp.~$x_{i,i}=0$ for $N+1 \le i \le \min\{\ell(\mu),\ell(\widetilde{\mu})\}$),
		\item $x_{i,j}$ is weakly decreasing when either $i$ or $j$ is fixed,
		\item $\sum_{j-i = r-\ell(\mu)} x_{i,j}=\mu_1+\ldots+\mu_r$ for $1 \le r\le \ell(\mu)$ and $\sum_{i-j = s-\ell(\widetilde{\mu})} x_{i,j}=\widetilde{\mu}_1+\ldots+\widetilde{\mu}_r$ for $1 \le s \le \ell(\widetilde{\mu})$.
	\end{enumerate}
\end{proposition}
\begin{proof}
If $\sum_j j a_j \neq \sum_j j b_j$, i.e.~$\sum_i \mu_i \neq \sum_i \widetilde{\mu}_i$, then the integrals \eqref{eq:intsc} and \eqref{eq:inttc} vanish as in the proof of \eqref{eq:DG}. In this case there can be no arrays satisfying the third condition with $(r,s)=(\ell(\mu),\ell(\widetilde{\mu}))$, which is what we needed to show. From now on we assume that  $\sum_j j a_j = \sum_j j b_j=n$ for some $n$.

A Gelfand--Tsetlin pattern (or GT-pattern) of $k$ rows is a triangular array of nonnegative integers $(a_{i,j})_{1 \le i\le j \le k}$ with $a_{i,j}\le a_{i+1,j+1}\le a_{i,j+1}$ when $1\le i,j\le k-1$; its largest element is $a_{1,k}$. SSYTs with entries in $\{1,2,\ldots,k\}$ are in one-to-one correspondence, described in detail by Stanley \cite[pp.~313-314]{stanley}, with such arrays. SSYTs of shape $\lambda=(\lambda_1,\ldots,\lambda_m)$ and weight $(w_1,\ldots,w_r)$ ($r \le k$) correspond to GT-patterns of $k$ rows such that $a_{1,i}= \lambda_{k-i+1}$  and $\sum_{j\in [i,k]} a_{i,j}=w_1+w_2+\ldots+w_{k-i+1}$ for all $1 \le i \le k$ (here $w_i\equiv 0$ if $i>r$, $\lambda_i\equiv 0$ if $i>m$). In words, the first row of the pattern recovers the shape of the SSYT (by reversing the row), and the row sums recover the weight.

In the proof of Proposition~\ref{prop:1steval} we saw that \eqref{eq:intsc} (resp.~\eqref{eq:inttc}) equals the number of pairs $(P,Q)$ of SSYTs where $P$ has weight $\mu$, $Q$ has weight $\widetilde{\mu}$, and $P$ and $Q$ have the same shape. This common shape, let us call it $\lambda$, satisfies $\lambda \vdash n$ and $\lambda_1 \le N$ (resp.~$\ell(\lambda) \le N$). Necessarily $\ell(\lambda)\le \min\{\ell(\mu),\ell(\widetilde{\mu})\}$, otherwise there are no such pairs. We apply the above one-to-one correspondence with GT-patterns to obtain all pairs $((a_{i,j})_{1\le i \le j\le \ell(\mu)},(b_{i,j})_{1\le i \le j \le \ell(\widetilde{\mu})})$ of patterns, such that their (common) first row is $\lambda$ (in reverse order), and their respective row sums encode $\mu$ and $\widetilde{\mu}$. 

Assume without loss of generality that $\ell(\mu)\ge \ell(\widetilde{\mu})$. We make the following observation: since $\ell(\lambda) \le \ell(\widetilde{\mu})$, it follows that $a_{1,i}=0$ for all $1 \le i \le \ell(\mu)- \ell(\widetilde{\mu})$. Due to the property $a_{i,j}\le a_{i+1,j+1}\le a_{i,j+1}$, this forces $a_{i,j}=0$ for all $1 \le i \le j \le \ell(\mu)-\ell(\widetilde{\mu})$. 

Next we define an array $(x_{i,j})_{1 \le i\le \ell(\mu),\, 1\le j \le \ell(\widetilde{\mu})}$, satisfying the three conditions in the proposition, by $x_{i,j}:=a_{1+i-j,\ell(\mu)+1-j}$ if $i \ge j$ and $x_{i,j}:=b_{1+j-i,\ell(\widetilde{\mu})+1-i}$ if $i \le j$, giving the desired result.

In the secular coefficient case, the condition $\lambda_1 \le N$ is encoded by the inequalities $a_{1,\ell(\mu)}=b_{1,\ell(\widetilde{\mu})}\le N$ which become $x_{1,1}\le N$. In the symmetric traces case, the condition $\ell(\lambda)\le N$ is encoded by the relations $a_{1,i}=0$ for $1 \le i \le \ell(\mu)-N$ and $b_{1,i}=0$ for $1 \le i \le \ell(\widetilde{\mu})-N$ (recall $\lambda$ -- in reverse order -- is the first row of the patterns $(a_{i,j})_{1\le i \le j \le \ell(\mu)}$ and  $(b_{i,j})_{1\le i \le j \le \ell(\widetilde{\mu})}$), which become $x_{i,i}=0$ for $N+1 \le i \le \ell(\widetilde{\mu})$.
\end{proof}
Summing Proposition~\ref{prop:2ndeval} over all sequences $(a_j)_j$ and $(b_j)_j$ of $k$ nonnegative integers such that $\sum_j ja_j=\sum_j jb_j = n$ and $\sum_j a_j =\sum_j b_j= k$, we recover Theorem~1.4 of \cite{keating2018}.
\section{Number-theoretic connections}\label{sec:NT}
\subsection{A polynomial analogue of \texorpdfstring{$d_{\lambda}$}{dlambda}}
Let $\lambda \vdash n$. Over $\mathbb{F}_q[T]$, the polynomial ring over the finite field of $q$ elements, it is straightforward to define an analogue of $d_{\lambda}$. Letting $\mathcal{M}_{n,q} \subseteq \mathbb{F}_q[T]$ be the subset of monic polynomials of degree $n$, we set
\[ d_{\lambda,q}(f):=\sum_{\substack{f_1 \cdots f_{\ell(\lambda)}=f \\ \forall i:\, \deg (f_i) = \lambda_i,\, f_i \text{ monic}}} 1. \]
If $\mu$, $\widetilde{\mu}$ are partitions of $n$ then we claim
\begin{equation}\label{eq:largeqlim}
 \lim_{q\to \infty}\frac{1}{q^n} \sum_{f \in \mathcal{M}_{n,q}} d_{\mu,q}(f) d_{\widetilde{\mu},q}(f) = \frac{1}{|S_n|} \sum_{ \pi \in S_n} d_{\mu}(\pi) d_{\widetilde{\mu}}(\pi)=N_{\mu,\widetilde{\mu}}.
\end{equation}
The second equality in \eqref{eq:largeqlim} is just Proposition~\ref{prop:avoid knuth}. The first equality in \eqref{eq:largeqlim} has to do with a general principle and applies to a general class of functions\footnote{Let $\alpha \colon S_n \to \mathbb{C}$ be class function. Let $\alpha_q \colon \mathcal{M}_{n,q} \to \mathbb{C}$ be a function with the following property: if $f$ is squarefree then $\alpha_q(f)=\alpha(\pi)$ whenever the multiset of degrees of the primes in the factorization of $f$ matches with the multiset of cycle lengths of the cycles in the cycle decomposition of $\pi$. Then $\sum_{f \in \mathcal{M}_{n,q}} \alpha_q(f)/q^n = \sum_{\pi \in S_n} \alpha(\pi)/n! + O_{n,\max|\alpha_q|}(1/q)$. This follows e.g.~from Cohen \cite{Cohen1970}.} However, the fact that the left-hand side of \eqref{eq:largeqlim} equals to the right-hand side can be established directly, as we sketch now. By definition, $\sum_{f \in \mathcal{M}_{n,q}} d_{\mu,q}(f) d_{\widetilde{\mu},q}(f)$ counts solutions to the equation
\[ f_1 f_2 \cdots f_{\ell(\mu)} = g_1 g_2 \cdots g_{\ell(\widetilde{\mu})}\]
where $\deg f_i = \mu_i$ and $\deg g_j = \widetilde{\mu}_j$. We explain how to count such solutions. We can associate with any pair of tuples of solutions $(f_i)_i$ and $(g_j)_j$ a \emph{gcd matrix}
\[ h_{i,j} := \gcd(g_i,f_j),\]
analogous to the matrix constructed in the proof of Proposition~\ref{prop:avoid knuth}.
At least if the $(g_i)_i$ are pairwise coprime and so are $(f_j)_j$, we may reconstruct them from $h_{i,j}$ via $g_i = \prod_{j}h_{i,j}$ and $h_j = \prod_{i} h_{i,j}$. Fortunately, in the large-$q$ limit we can impose these coprimality conditions and only incur an error of $o_{q \to \infty}(q^{n})$ (the details are left to the reader).

Given a gcd matrix $(h_{i,j})_{i,j}$ (coming from pairwise coprime $(g_i)_i$ and pairwise coprime $(f_j)_j$) we can further form a \emph{degree matrix} $(\deg h_{i,j})_{i,j}$ of the same size, which is counted by $N_{\mu,\widetilde{\mu}}$. For each degree matrix $(d_{i,j})_{i,j}$ counted by $N_{\mu,\widetilde{\mu}}$ we need to count the number of gcd matrices corresponding to it, that is, monic $h_{i,j}$ with $\deg h_{i,j}=d_{i,j}$ and $h_{i,j}$ being pairwise coprime (a consequence of the pairwise coprimality of $(g_i)_i$ and $(f_j)_j$). As already mentioned, in the large-$q$ limit these coprimality conditions do not affect asymptotics, and the number of such $h_{i,j}$ is $q^{\sum_{i,j} d_{i,j}}(1+o_{q \to \infty}(1))=q^{n}(1+o_{q \to \infty}(1))$ (so, always asymptotic to $q^n$, regardless of the specific $(d_{i,j})_{i,j}$). All in all, we obtain that the left-hand side of \eqref{eq:largeqlim} is equal to the right-hand side, without using the first equality in \eqref{eq:largeqlim}.

This idea can be made to work for fixed $q$ as well using a clever construction of Vaughan and Wooley \cite[\S8]{Vaughan1995}, which can be used to show   that in fact $\sum_{f \in \mathcal{M}_{n,q}} d_{\mu,q}(f) d_{\widetilde{\mu},q}(f)$ is a polynomial in $q$ of degree $n$ and leading coefficient $N_{\mu,\widetilde{\mu}}$, but we do not give details. They construct matrices which take into account common factors. They used an inductive process to associate with a solution of $m_1 \cdots m_{\ell(\mu)} = n_1 \cdots n_{\ell(\widetilde{\mu})}$ a $\ell(\mu) \times \ell(\widetilde{\mu})$ matrix $(a_{i,j})_{i,j}$ such that $m_i=\prod_{i=1}^{\ell(\widetilde{\mu})}a_{r,i}$ and $n_i=\prod_{i=1}^{\ell(\mu)}a_{i,r}$; this also works with polynomials instead of integers. The process goes by letting $a_{1,1}=\gcd(m_1,n_1)$ and then defining, using induction on $i+j$, $a_{i,j}=\gcd(m_i/\prod_{\ell<j}a_{i,\ell},n_j/\prod_{\ell<i}a_{\ell,j})$.
The above process was discovered independently by Granville and Soundararajan in the proof of \cite[Thm.~4]{Granville2001}.
We explain how these gcd matrices arose in their work because it is not to difficult to relate it to matrix integrals. They were interested in the order of magnitude of moments of character sums:
\[ M_k(x,q) = \frac{1}{\phi(q)}\sum_{\chi \bmod q} \left|\sum_{n \le x} \chi(n)\right|^{2k}.\]
Here the sum is over all Dirichlet characters modulo $q$. Using orthogonality of characters, we have
\[ M_k(x,q) = \#\{ n_1 n_2 \cdots n_k \equiv m_1 m_2 \cdots m_k \bmod q,\, \forall  i:\, n_i,m_i \le x,\, (n_im_i,q)=1\}.\]
If $x^k \le q$, this reduces to
\[ M_k(x,q) = \#\{ n_1 n_2 \cdots n_k = m_1 m_2 \cdots m_k,\, \forall  i:\,  n_i,m_i \le x,\, (n_im_i,q)=1\}.\]
They were also interested in moments of sums of the (Steinhaus) random multiplicative function, which we denote by $\alpha$ and whose definition we now recall. It is a random \textit{completely} multiplicative function ($\alpha(nm)=\alpha(n)\alpha(m)$ for all $n,m\ge 1$), chosen in such a way that $(\alpha(p))_p$ ($p$ prime) are i.i.d.~random variables taking values uniformly on $\{z \in \mathbb{C}: |z|=1\}$. Then we have the orthogonality relation
\begin{equation}\label{eq:Steinhaus}
\mathbb{E} \alpha(n) \overline{\alpha}(m) = \delta_{nm}
\end{equation}
and similarly, 
\[ M_k(x) := \mathbb{E}\left|\sum_{n \le x} \alpha(n)\right|^{2k}= \#\{ n_1 n_2 \cdots n_k = m_1 m_2 \cdots m_k,\, \forall  i:\, n_i,m_i \le x\}.\]
The gcd matrices one associates with the solutions counted by $M_k(x,q)$ (if $x^k \le q$) and $M_k(x)$ can be counted and in turn lead to bounds on $M_k(x,q)$ and $M_k(x)$ as in \cite[Thms.~4.1--4.2]{Granville2001}. We refer the reader to Heap and Lindqvist \cite{Heap2016} for asymptotics results for $M_k(x,q)$ ($x^k \le q$) and $M_k(x)$ (cf.~Harper, Nikeghbali and Radziwi\l\l  \, \cite{Harper2015}). See also Harper \cite{Harper2019,Harper2020} for estimates on $M_k(x)$ in a wide range of $k\ge 0$, including noninteger $k$.

In function fields, the connection between character sums and secular coefficients is natural. If $\chi$ is a nonprincipal Dirichlet character modulo $Q$, we can form its Dirichlet $L$-function
\[ L(u,\chi) = \sum_{f \text{ monic}} \chi(f)u^{\deg f},\]
which is a polynomial whose $n$th coefficient is \emph{exactly} a character sum. By Weil's Riemann hypothesis, we can see that
\begin{equation}\label{eq:secular}
\sum_{f \in \mathcal{M}_{n,q}} \chi(f) \ll_{n,\deg Q} q^{\frac{n}{2}}.
\end{equation}
If $\chi$ is odd (i.e.~$\chi$ is not trivial on $\FF_q^{\times}$) and primitive we have $\deg L(u,\chi)= \deg Q - 1$ and the zeros of $L$ all lie on $|u|=q^{-1/2}$ so that we can write $L(u,\chi)$ as a characteristic polynomial of a scaled unitary matrix:
\[ L(u,\chi) = \det(I_{\deg Q-1} - u\sqrt{q} \Theta_{\chi}),\, \Theta_{\chi} \in U(\deg Q-1).\]
Comparing coefficients, we find that
\begin{equation}\label{eq:sec}
\Sc_n(\Theta_{\chi}) = \frac{(-1)^n}{q^{\frac{n}{2}}} \sum_{f \in \mathcal{M}_{n,q}} \chi(f).
\end{equation}
If $nk \le \deg Q$ then orthogonality relations show
\begin{equation}\label{eq:orthchar} \frac{1}{\phi(Q)} \sum_{\chi \bmod Q} \left|\sum_{f \in \mathcal{M}_{n,q}} \chi(f) \right|^{2k} =  \#\{ f_1 f_2 \cdots f_k = g_1 g_2 \cdots g_k, \, \forall i:\,\deg f_i = \deg g_i = n,\, (f_i g_i,Q)=1\}.
\end{equation}
Since in the large-$q$ limit a random polynomial will be coprime to $Q$ with probability approaching $1$, we can deduce from \eqref{eq:orthchar} (using gcd matrices as used in establishing \eqref{eq:largeqlim}) that
\begin{equation}\label{eq:magicfqt}
\frac{1}{\phi(Q)} \sum_{\chi \bmod Q} \left|\sum_{f \in \mathcal{M}_{n,q}} \chi(f) \right|^{2k} =(1+o_{q \to \infty}(1)) q^{nk} N_{(n^k),(n^k)}	
\end{equation}
holds as $q \to \infty$, where we assume $nk \le \deg Q$ and that $n$, $k$ and $\deg Q$ are fixed. Here $(n^k)$ stands for the partition of $nk$ consisting of $n$ repeated $k$ times. The term $o_{q \to \infty}(1)$ goes to $0$ as $q \to \infty$ and may depend on the fixed parameters. We can also package \eqref{eq:orthchar} as
\begin{equation}\label{eq:pack}
\frac{1}{\phi(Q)} \sum_{\chi \bmod Q} \left|\sum_{f \in \mathcal{M}_{n,q}} \chi(f) \right|^{2k} = \mathbb{E} \left|\sum_{\substack{f \in \mathcal{M}_{n,q}\\(f,Q)=1}} \alpha(f)\right|^{2k}
\end{equation}
where now $\alpha$ is a Steinhaus random multiplicative function in $\mathbb{F}_q[T]$ which satisfies orthogonality relations similar to \eqref{eq:Steinhaus}, and $nk \le  \deg Q$.

In the large-$q$ limit almost all characters are primitive and odd \cite[Eq.~(3.25)]{KR} and we obtain from \eqref{eq:secular} and \eqref{eq:sec} that
\begin{equation}\label{eq:comp}
\frac{1}{\phi(Q)} \sum_{\chi \bmod Q} \left|\sum_{f \in \mathcal{M}_{n,q}} \chi(f) \right|^{2k} = q^{nk} \left( \mathbb{E}_{\substack{\chi\text{ odd,}\\ \text{primitive mod }Q}} \left|\Sc_{n}(\Theta_{\chi})\right|^{2k} + o_{q \to \infty}(1)\right)
\end{equation}
if $nk \le \deg Q -1$ (to handle the contribution of the principal character). Comparing \eqref{eq:comp} and \eqref{eq:magicfqt} we find that
\begin{equation}\label{eq:comp2}
\lim_{q \to \infty}\mathbb{E}_{\substack{\chi\text{ odd,}\\ \text{primitive mod }Q}} \left|\Sc_{n}(\Theta_{\chi})\right|^{2k} = N_{(n^k),(n^k)} = \int_{U(\deg Q - 1)} \left| \Sc_{n}(U)\right|^{2k} dU
\end{equation}
where the last equality is \eqref{eq:DG}. Here $n$, $k$ and $\deg Q$ are fixed and satisfy $nk \le \deg Q-1$. The fact that the left-hand side of \eqref{eq:comp2} converges to the right-hand side is essentially a special case of a deep equidistribution theorem of Katz \cite{Katz2013}. Katz proved that -- at least for squarefree $Q$ -- the ensemble $( \Theta_{\chi})_{\chi\text{ odd, primitive mod }Q}$ \textit{equidistributes} in $U(\deg Q-1)$ as $q \to \infty$. In other words, the average of a continuous function $f\colon U(\deg Q-1)\to \mathbb{C}$ over the finite ensemble $(\Theta_{\chi})_{\chi\text{ odd, primitive mod }Q}$ converges, as $q \to \infty$, to an average of $f$ over the full group $U(\deg Q-1)$.\footnote{Strictly speaking, Katz requires $f$ to be invariant under scalars: $f(cU)=f(U)$ for $|c|=1$.} For certain test functions $f$, Katz's result can be proved elementarily, and in fact his proof proceeds by first (without using algebraic geometry) establishing it for a particular family of functions. The elementary proof of \eqref{eq:comp2}, which corresponds to $f(U)=|\Sc_n(U)|^{2k}$, certainly \textit{does not} generalize to general functions.

What about traces of symmetric powers? These arise when twisting the M\"obius function by a character. Given a Dirichlet character $\chi$ modulo $Q$, we have
\begin{equation}\label{eq:recip}
 \frac{1}{L(u,\chi)} = \sum_{f \text{ monic}} \mu(f)\chi(f)u^{\deg f}.
\end{equation}
The $n$th coefficient of this rational function is
\begin{equation}\label{eq:RHrecip}
\sum_{f \in \mathcal{M}_{n,q}} \mu(f)\chi(f) \ll_{n, \deg Q} q^{\frac{n}{2}}
\end{equation}
by Weil's Riemann hypothesis. If $\chi$ is odd and primitive we have
\begin{equation}
\frac{1}{L(u,\chi)} = \frac{1}{\det(I_{\deg Q-1}-u\sqrt{q}\Theta_{\chi})} = \sum_{n=0}^{\infty} q^{\frac{n}{2}}u^n \Tr\Sym^n (\Theta_{\chi}),
\end{equation}
i.e.
\begin{equation}\label{eq:RecipIden}
	\sum_{f \in \mathcal{M}_{n,q}} \mu(f)\chi(f)=q^{\frac{n}{2}} \Tr\Sym^n (\Theta_{\chi})
\end{equation}
for all $n$. In the large-$q$ limit almost all characters are primitive and odd and we obtain from \eqref{eq:recip}, \eqref{eq:RHrecip} and \eqref{eq:RecipIden} that
\[  \frac{1}{\phi(Q)} \sum_{\chi \bmod Q} \left|\sum_{f \in \mathcal{M}_{n,q}} \mu(f)\chi(f) \right|^{2k} =  q^{nk} \left(  \mathbb{E}_{\substack{\chi\text{ odd,}\\ \text{primitive mod }Q}} \left|\Tr\Sym^n(\Theta_{\chi})\right|^{2k} + o_{q \to \infty}(1)\right)\]
if $k\le \deg Q -1$ (to handle the contribution of the principal character). In particular, by using Katz's equidistribution result  \cite[Thm.]{Katz2013},
\begin{equation}\label{eq:KatzUse}
\frac{1}{\phi(Q)} \sum_{\chi \bmod Q} \left|\sum_{f \in \mathcal{M}_{n,q}} \mu(f)\chi(f) \right|^{2k} \sim q^{nk}\int_{U(\deg Q-1)} 
 \left|\Tr\Sym^n(U)\right|^{2k} = q^{nk}N_{(n^k),(n^k)}
\end{equation}
holds as $q \to \infty$ if $k\le \deg Q-1$, at least if $Q$ is squarefree (a condition required by Katz). In the second equality in \eqref{eq:KatzUse} we used Lemma~\ref{lem:trsym}, which is the reason for the range $k \le \deg Q-1$.  

In the restricted range $nk \le \deg Q$ we can mimic \eqref{eq:orthchar}--\eqref{eq:pack} and obtain
\begin{equation}\label{eq:mimic}
\frac{1}{\phi(Q)} \sum_{\chi \bmod Q} \left|\sum_{f \in \mathcal{M}_{n,q}} \mu(f)\chi(f) \right|^{2k} = \mathbb{E} \left|\sum_{\substack{f \in \mathcal{M}_{n,q}\\(f,Q)=1}} \mu^2(f)\alpha(f) \right|^{2k}.
\end{equation}
The asymptotic relation \eqref{eq:KatzUse} can be shown to imply that if we replace equality in \eqref{eq:mimic} with an asymptotic then it continues to hold in the much wider range $k \le \deg Q - 1$, at least if one takes a large-$q$ limit and assumes $Q$ is squarefree. We believe this should hold without taking the large-$q$ limit and for all $Q$, and we suggest the following conjecture in integers.
\begin{conj}\label{conj:mob}
Suppose $q$ and $x$ tend to $\infty$ and that $k < \log q$. We have 
\begin{align}\label{eq:conj}\frac{1}{\phi(q)} \sum_{\chi \bmod q} \left|\sum_{n \le x} \mu(n)\chi(n)\right|^{2k} &\sim \mathbb{E} \left| \sum_{\substack{n \le x\\(n,q)=1}} \mu^2(n)\alpha(n)\right|^{2k},\\
\frac{1}{\phi(q)} \sum_{\chi \bmod q} \left|\sum_{n \le x} \lambda(n)\chi(n)\right|^{2k} &\sim \mathbb{E} \left| \sum_{\substack{n \le x\\(n,q)=1}} \alpha(n)\right|^{2k}.
\end{align}
Here $k$ is allowed to vary with $x$ and $q$, and $\lambda$ is the Liouville function $\lambda(n)=(-1)^{\sum_{p,\, k \ge 1:\, p^k \mid n} 1}$.
\end{conj}
This should be contrasted with the more modest range $x^k\le q$ that occurs in the same problem but without an appearance of $\mu$ or $\lambda$:
\begin{equation}\label{eq:momchi} \frac{1}{\phi(q)} \sum_{\chi \bmod q} \left|\sum_{n \le x} \chi(n)\right|^{2k} = \mathbb{E} \left| \sum_{\substack{n \le x\\(n,q)=1}} \alpha(n)\right|^{2k}.
\end{equation}
\begin{remark}
The range $x^k \le q$ is essentially optimal (up to $x^{o(1)}$), even if we replace equality with an asymptotic. In the current formulation this is trivial, since the principal character contributes $x^{2k}/\phi(q)$ to the left-hand side of \eqref{eq:momchi}. If one removes the principal character, this still should be optimal. We do not attempt to demonstrate this here, but make two comments: 1) if $x$ is an integer divisible by $q$ then $\sum_{n\le x} \chi(n)$ vanishes for any nonprincipal $\chi$, showing the left-hand side of \eqref{eq:momchi} vanishes for $x=q$ if we remove the principal character, 2) the optimality claim is related to $\int_{U(N)} |\Sc_n(U)|^{2k}\,dU \sim  N_{(n^k),(n^k)}$ failing to hold if $N \le nk(1-\varepsilon)$ and $n \to \infty$, which we expect can be demonstrated using Proposition~\ref{prop:2ndeval} and the tools in \cite{keating2018}. 
\end{remark}
Already for $k=1$ Conjecture~\ref{conj:mob} is a very difficult open problem, related to the variance of $\mu$ and $\lambda$ in arithmetic progressions, see e.g.~\cite{Keating2016}.

We view Conjecture~\ref{conj:mob} as a manifestation of M\"obius randomness. One interpretation of it is that the Steinhaus random multiplicative function $\alpha$ (resp.~$\alpha \cdot \mu^2$) is a good model for $\lambda$ (resp.~$\mu$) twisted by a random Dirichlet character $\chi$ modulo $q$ ($q \to \infty$)\footnote{Of course, $\alpha$ is never zero while $\chi$ vanishes at primes dividing $q$. This can be remedied by multiplying $\alpha$ by the principal character modulo $q$, or alternatively by working with $q$ that is a prime.}. It certainly models $\lambda$ times a random character better than it models just a random character, at least from the point of view of moments.

Let 
\begin{align}
\mu_{x,k}(n) &:= \sum_{n_1 n_2 \cdots n_k = n, \, \forall i: \, n_i \le x} \prod_{i=1}^{k} \mu(n_i),\\
\lambda_{x,k}(n) &:= \sum_{n_1 n_2 \cdots n_k = n, \, \forall i: \, n_i \le x} \prod_{i=1}^{k} \lambda(n_i)=\lambda(n)\sum_{n_1 n_2 \cdots n_k = n, \, \forall i: \, n_i \le x} 1.
\end{align}
From orthogonality of characters we have the identities
\begin{equation}\label{eq:iden}
\frac{1}{\phi(q)} \sum_{\chi \bmod q} \left|\sum_{n \le x}\mu(n)\chi(n)\right|^{2k} =\frac{1}{\phi(q)} \sum_{\chi \bmod q} \left|\sum_{n \le x^k}\mu_{x,k}(n)\chi(n)\right|^{2}=  \sum_{\substack{n,m \le x^k\\ n \equiv m \bmod q \\ (nm,q)=1}} \mu_{x,k}(n) \mu_{x,k}(m).
\end{equation}
and similarly
\begin{equation}\label{eq:iden2}
	\frac{1}{\phi(q)} \sum_{\chi \bmod q} \left|\sum_{n \le x}\lambda(n)\chi(n)\right|^{2k} =\frac{1}{\phi(q)} \sum_{\chi \bmod q} \left|\sum_{n \le x^k}\lambda_{x,k}(n)\chi(n)\right|^{2}=  \sum_{\substack{n,m \le x^k\\ n \equiv m \bmod q \\ (nm,q)=1}} \lambda_{x,k}(n) \lambda_{x,k}(m).
\end{equation}
Observe that the diagonal terms in \eqref{eq:iden} are $\sum_{n \le x^k, \, (n,q)=1} \mu_{x,k}(n)^2$, and this sum satisfies (using \eqref{eq:Steinhaus})
\[\sum_{n \le x^k, \, (n,q)=1} \mu_{x,k}(n)^2= \mathbb{E} \left| \sum_{\substack{n \le x\\(n,q)=1}} \mu^2(n) \alpha(n)\right|^{2k}.\]
Hence, Conjecture~\ref{conj:mob} says that the diagonal terms $n=m$ give (asymptotically) the main contribution to the $2k$th moment in \eqref{eq:iden}, as long as $k< \log q$ and $\min\{q,x\} \to \infty$. Similarly, the same applies to \eqref{eq:iden2}, according to the conjecture.
\subsection*{Acknowledgements}
We thank Jon Keating and Brad Rodgers for useful comments and suggestions and Zachary Chase for a typographical correction. This project has received funding from the European Research Council (ERC) under the European Union's Horizon 2020 research and innovation programme (grant agreement No 851318).

\subsection*{Data availability}
Data availability is not applicable to this article as no new data were created or analysed in this study.
\bibliographystyle{abbrv}
\bibliography{references}

\Addresses
\end{document}